\newtheorem{thm}{Theorem}
\newtheorem{cor}{Corollary}[section]
\newtheorem{lem}{Lemma}[section]
\newtheorem{prop}{Proposition}[section]
\newtheorem{conj}[thm]{Conjecture}
\theoremstyle{remark}
\newtheorem{rmk}{Remark}[section]
\theoremstyle{definition}
\newtheorem{defi}{Definition}[section]
\numberwithin{equation}{section}
\def\p{\partial}
\def\R{\mathbb{R}}
\def\C{\mathbb{C}}
\def\l{\lambda}
\def\i{\sqrt{-1}}
\def\D{\Delta}
\def\cE{{\mathcal E}}
\def\cF{{\mathcal F}}
\def\cH{{\mathcal H}}
\def\cK{{\mathcal K}}
\def\cL{{\mathcal L}}
\def\cX{{\mathcal X}}
\begin{document}

\title[Kahler-Ricci soliton and $H$-functional]{Kahler-Ricci soliton and H-functional}

\author{Weiyong He}

\address{Department of Mathematics, University of Oregon, Eugene, Oregon, 97403}
\email{whe@uoregon.edu}

\begin{abstract}We consider  K\"ahler-Ricci soliton on a Fano manifold $M$. We introduce an $H$-functional on $M$; we show that its critical point has to be a Kahler-Ricci soliton and the Kahler-Ricci flow can be viewed as its reduced gradient flow.  We then obtain a natural lower bound of $H$-functional in terms of an invariant of holomorphic vector fields on $M$. As an application, we prove that a Kahler-Ricci soliton, if exists, maximizes Perelman's $\mu$-functional. Second we consider a conjecture  proposed by S.K. Donaldson regarding the existence of Kahler metrics with constant scalar curvature in terms of $\cK$-energy; a simple observation is that on Fano manifolds, one can consider Donaldson's conjecture in terms of Ding's $\cF$-functional. We then state geodesic stability conjecture on Fano manifolds in terms of $\cF$-functional.  Similar pictures can be naturally extended  to a Kahler-Ricci soliton and modified $\cF$-functional.  
\end{abstract}

\maketitle

\section{Introduction}

In this paper we consider K\"ahler-Ricci soliton on a Fano manifold $M$.  Ricci solitons were defined by R. Hamilton in the study of Ricci flow \cite{Hamilton82}  and it plays a significant role in the theory of Ricci flow.  A Kahler-Ricci soliton on a Fano manifold is Kahler-Einstein precisely when the Futaki invariant vanishes and hence Kahler-Ricci soliton is a natural generalization of Kahler-Einstein metrics.  In his study of Hamilton's Ricci flow, Perelman \cite{Perelman01} introduced many revolutionary ideas, including the well-known entropy functionals, which lead him to the solution of the Poincare conjecture and ThurstonÕs geometrization conjecture. Ricci flow is the gradient flow of Perelman's $\mu$-functional (modulo diffeomorphisms).  
On Fano manifolds, Kahler-Ricci solitons are critical points of Perelman's $\mu$-functional.  A Kahler-Einstein metric, if exists, maximizes $\mu$-functional. Kahler-Ricci soliton is also expected to maximize $\mu$-functional.  This is actually the case \cite{TZ3},  for example,  if we consider only metrics which are invariant with respect to the imaginary part of extremal vector field studied in Tian-Zhu \cite{Tianzhu02}. 

Our main observation is to consider a functional (this quantity has been studied along the Kahler-Ricci flow in literature) on Fano manifolds.  We show that a Kahler-Ricci soliton is a critical point of this functional. We then consider a lower bound of this functional and  this turns out to be rather straightforward for invariant metrics. In general, we need to consider a natural geometric structure of the space of Kahler potentials, studied by Mabuchi \cite{Mabuchi87}, Semmes \cite{Semmes} and Donaldson \cite{Donaldson99}. The key notion is the geodesic segments and the geodesic rays, in the manner of X.-X. Chen \cite{Chen00, Chen09}.  We then obtain a natural lower bound, called H-invariant which was studied in \cite{TZ3} and this result relies on the key technique fact  of the convexity of Ding's $\cF$-functional \cite{Ding}, established by Berndtsson \cite{Bern06, Bern11}.  As a direct application, we prove that Kahler-Ricci soliton, if exists, maximizes Perelman's $\mu$-functional.

Donaldson \cite{Donaldson99} formulated a conjecture relating existence of constant scalar curvature with the geometric structure of the space of Kahler potentials, in particular with the limit behavior of (derivative of) Mabuchi's $\cK$-energy  (defined in \cite{Mabuchi87}) along geodesic rays. It is very intuitive to understand Donaldson's conjecture in terms of critical points of  the (formally) convex functional, Mabuchi's $\cK$-energy. The constant scalar curvature metric is a critical point of $\cK$-energy (actually minimizer) and $\cK$-energy is (formally) convex in terms of geodesics in the space of Kahler potentials.  Donaldson's conjecture naturally leads to the geodesic stability which was first introduced by Chen \cite{Chen09} in terms of $\rho$-invariant along geodesic rays (see Phong-Sturm \cite{Phongsturm07} also for some related work). Chen \cite{Chen08} made one further step to study the lower bound of $\cK$-energy and partially confirmed Donaldson's conjecture. 

On Fano manifold, it is natural to consider  the geodesic stability in terms of $\cF$-functional. The key advantage of $\cF$-functional is that it is actually convex for geodesic rays with very weak regularity by Berndtsson \cite{Bern11}. This key feature has already been explored by Berndtsson \cite{Bern11} and others (see Berman \cite{Berman, Berman12} for example). For us this removes the main technical obstacle caused by the rather weak regularity of geodesic segments and geodesic rays. Our discussion can naturally be extended to the Kahler-Ricci solitons,  using the modified $\cF$-functional and the modified Futaki invariant, studied by Tian-Zhu \cite{Tianzhu00, Tianzhu02}. We then formulate a version of geodesic stability for Kahler-Ricci solitons on Fano manifolds.  

We organize the paper as follows. In Section 2 we study the $H$-functional and its relation with Kahler-Ricci solitons. In Section 3 we discuss the geodesic stability in terms of $\cF$-functional and modified $\cF$-functional for Kahler-Einstein metrics and Kahler-Ricci solitons.\\

{\bf Acknowledgement:} The author is partially supported by an NSF grant, award No. DMS-1005392.  The author thanks the referee for numerous valuable suggestions and comments, which help to improve the expository of the paper significantly.
The author is very grateful to Song Sun for numerous discussions. The author also thanks Prof. X.-X. Chen for constant support and encouragements. His series work on geodesic stability \cite{Chen09, Chen08} has definite influence on the present work.

\section{Kahler-Ricci soliton and $H$-functional}
\subsection{$H$-functional on Fano manifolds}
Let $(M, [\omega_0])$ be a compact Fano manifold. For any K\"ahler metric $\omega\in [\omega_0]$, the Ricci potential $h$ of $\omega$ is defined to be
\begin{equation}\label{Ricci-potential}
Ric(\omega)-\omega=\sqrt{-1}\p\bar \p h,
\end{equation}
with the normalization condition
\begin{equation}\label{E-hn}
\int_M e^{h}\omega^n=\int_M\omega^n=V, 
\end{equation}
where $V=(2\pi)^n[c_1(M)]^n.$
We define \emph{$H$ functional} as follows, 
\begin{equation}
H(\omega)=\int_M  h e^{ h}  \omega^n. 
\end{equation}
It is worthwhile to point out that $H$-functional can be viewed as the  $H$-entropy in probability theory applied to  the measure $V^{-1}e^h\omega^n$ with respect to the measure $V^{-1}\omega^n$. The celebrated Csisz\'ar-Kullback-Pinsker inequality asserts that, on a complete metric and separate space $\cX$, for any two probability measures $\nu, \mu$, we have, 
\begin{equation}\label{CKP}
\|\nu-\mu\|_{TV}\leq \sqrt{2H(\nu|\mu)},
\end{equation}
where $\|\nu-\mu\|_{TV}$ is the total variation and \[H(\nu|\mu)=\int_\cX \log\frac{d\nu}{d\mu}d\nu\] is the so-called $H$-entropy. We refer the readers to a survey paper \cite{GL} for example, for a nice proof of \eqref{CKP}. Apply \eqref{CKP} with $\mu=V^{-1}\omega^n, \nu=V^{-1}e^h\omega^n$, we get
\begin{equation}\label{CKP1}
\frac{1}{2V}\left(\int_M|1-e^h|\omega^n\right)^2\leq \int_M he^h\omega^n=H(\omega). 
\end{equation}
Hence $H(\omega)$ is a norm-like functional and it is zero precisely when $h=0$, namely when $\omega$ is a Kahler-Einstein metric. 

Recall a Kahler-Ricci soliton satisfies that $\nabla h$ is the real part of a holomorphic vector field $X$; equivalently, the metric satisfies
\[
Ric(g)=g+L_Xg.
\]
When $X=0$ the metric is then a Kahler-Einstein metric with positive scalar curvature. 
In \cite{Tianzhu00, Tianzhu02} Tian-Zhu  proved  the uniqueness of a Kahler-Ricci soliton modulo automorphisms of $M$, extending Bando-Mabuchi's uniqueness theorem \cite{BM} on Kahler-Einstein metrics on Fano manifolds. In particular they proved that $X$ is determined \emph{a priori} by $(M, [\omega_0])$ and it is unique up to automorphism.  
In general we will show that there is a close relation of $H$-functional with a Kahler-Ricci soliton.  
The main result of the paper is the following,

\begin{thm}\label{T-noninvariant}
For any metric $\omega\in [\omega_0]$, there exists a nonnegative numerical invariant $N_X$ of $(M, [\omega_0])$ such that
\begin{equation}\label{E-noninvariant}
H(\omega)\geq N_X,
\end{equation}
where the equality holds if and only of $\omega$ is a Kahler-Ricci soliton in $[\omega_0]$. 
\end{thm}
The invariant $N_X$ in terms of the holomorphic vector field $X$ appears in a recent paper of Tian-Zhang-Zhang-Zhu \cite{TZ3} and the relevant definitions will be recalled below. 

Before we prove Theorem \ref{T-noninvariant}, we shall first explore some interesting properties of $H$-functional and its relation with a Kahler-Ricci soliton. 
We will need the following result due to A. Futaki \cite{Futaki88}.

\begin{prop}[Futaki]\label{Futaki}Let $(M, [\omega_0])$ be a Fano manifold. Suppose $h$ is the normalized Ricci potential of $\omega\in [\omega_0]$, then \[
L_h u=-(\Delta u+\nabla u\nabla h+u)
\]
is a self-adjoint positive operator with respect to $e^h\omega^n$.
In particular, the modified Poincare inequality holds
\begin{equation}\label{E-poincare}
\int_M u (L_hu) e^h\omega^n=\int_M (|\nabla u|^2-u^2)e^h \omega^n\geq 0. 
\end{equation} where  $u$ satisfies the normalized condition
\[
\int_M u e^h \omega^n=0. 
\]
The equality holds if and only if  $L_h u=0$ and  it is equivalent to that $\nabla u$ is a real holomorphic vector field.
\end{prop}
First we have the following, 
\begin{prop}The Euler-Lagrangian equation of $H(\omega)$ is given by
\[
\Delta h+|\nabla h|^2+h=constant.
\]
Equivalently $\nabla h$ is a real holomorphic vector field. Hence a critical point of $H(\omega)$ is a Kahler-Ricci soliton. 
\end{prop}

\begin{proof}
We compute the first variation of $H(\omega)$ directly as follows. 
We write $\omega$ in terms of its Kahler potential, \[\omega=\omega_\phi=\omega_0+\i \p\bar \p \phi.\]
Suppose the variation of $\phi$ is given by $\delta \phi$.
By the definition of Ricci-potential \eqref{Ricci-potential}, we compute,  
\[
\p \bar \p (\delta h)=\p\bar \p (-\Delta \delta \phi-\delta \phi).
\]
Since $H(\omega)$ does not depend on the choice of normalization Kahler potential, we can choose a normalization of Kahler potential such that
\[
\delta h=-\Delta \delta \phi-\delta \phi.
\]
The normalization condition \eqref{E-hn} then implies that $\delta \phi$ satisfies
\[
\int_M \delta \phi e^h \omega^n=0.
\]
We compute
\begin{equation}\label{E-Hvar}
\begin{split}
\delta H(\omega)&=\int_M \left(-(\Delta \delta \phi+\delta\phi)e^h -he^h(\Delta \delta\phi+\delta\phi)+he^h\Delta \delta\phi\right)\omega^n\\
&=-\int_M \delta\phi (\Delta h+|\nabla h|^2+h)e^h\omega^n. 
\end{split}
\end{equation}
The Euler-Lagrangian equation is given by
\begin{equation}\label{E-potential}
\Delta h+|\nabla h|^2+h=constant. 
\end{equation}
Integrating \eqref{E-potential} with respect to $e^h\omega^n$,  the constant above is given exactly by $V^{-1}H$. 
Applying Proposition \ref{Futaki} to $h-V^{-1}H$,  \eqref{E-potential} implies that $\nabla h$ is a real holomorphic vector field and hence a critical point of $H(\omega)$ is a K\"ahler-Ricci soliton. 
\end{proof}
Along the Kahler-Ricci flow
\[
\frac{\p \omega}{\p t}=\omega-Ric(\omega), 
\]
one can compute directly that
\begin{equation}
\frac{\p H}{\p t}=-\int_M\left(|\nabla h|^2-(h-V^{-1}H)^2\right)e^h\omega^n\leq 0. 
\end{equation}
The equality holds exactly when $\omega$ is a K\"ahler-Ricci soliton. 
By \eqref{E-Hvar} the gradient flow of $H(\omega)$ is given by
\[
\frac{\p \phi}{\p t}=\Delta h+|\nabla h|^2+h
\] 
and this is a fourth-order equation. However, since the operator 
\[
L_h u=-(\Delta u+\nabla u\nabla h+u)
\]
is a self-adjoint positive operator with respect to $e^h\omega^n$, we can choose naturally $\delta \phi=-h$ to decrease $H$-functional, with the corresponding flow by
\[
\frac{\p \phi}{\p t}=-h,
\]
which is exactly the K\"ahler-Ricci flow on potential level.  The advantage is that this reduces the flow equation by two orders (compared with the gradient flow). This  formal picture resembles for a general Kahler class that of the Calabi energy, extremal metrics and the Calabi flow \cite{Calabi82};  $H(\omega)$ plays the similar role as the Calabi energy. 

Next we introduce some numerical invariants of $(M, [\omega_0])$ in terms of holomorphic vector fields, in particular we recall the definition of $N_X$. 
Let $Aut_\C(M)$ be the automorphism group of $M$ and $G$ be a maximal compact subgroup.  Suppose further  that $\omega$ is a $G$-invariant metric. Then there is a Lie algebra homomorphism from $Lie(G)$ to the functions on $M$, under Poisson bracket. Let $\xi\in Lie(G)$ and let $\theta_\xi$ be the corresponding Hamiltonian; namely
$d\theta_\xi=\iota_\xi \omega $ with a normalization condition for $\theta_\xi$,
\[
\int_M e^{\theta_\xi} \omega^n=\int_M \omega^n=V. 
\] 
Define the integral
\begin{equation}\label{E-Hinvariant}
H_0(\xi, \omega)=\int_M \theta_\xi e^h \omega^n.
\end{equation}
Tian-Zhang-Zhang-Zhu proved that  (\cite{TZ3} Section 5) that $H_0(\xi, \omega)$ is independent the choice of $\omega$, and hence it defines a numerical invariant on $Lie(G)$. We denote it simply by $H_0(\xi)$ for $\xi \in G$;  moreover, Tian-Zhang-Zhang-Zhu proved that
$H_0$ is a concave function in $Lie(G)$ with a unique maximizer $\xi_0\in Lie(G)$, where $\xi_0$ is  the imaginary part of  the extremal vector field $X$. The invariant $N_X$ is then defined by
\[
N_X=H_0(\xi_0)=\max_{\xi\in Lie(G)} H_0(\xi). 
\]
It is also proved that in \cite{TZ3} that $N_X\geq 0$ and it is zero precisely when $X=0$, or equivalently, the Futaki invariant is zero. 

Suppose we only consider $G$-invariant metrics, then one can easily prove that $H(\omega)\geq N_X$, as a special case of Theorem \ref{T-noninvariant}. Since the proof is straightforward, we include the argument here. 
\begin{prop}\label{P-invariant}Let $\omega$ be a $G$-invariant metric, then
\[
H(\omega)\geq N_X=\max_{\xi\in Lie(G)}H_0(\xi)=H_0(\xi_0). 
\]
\end{prop}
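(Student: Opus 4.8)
The plan is to reduce the inequality to a single application of Jensen's inequality, in the same spirit as the proof that $\cH(\omega)\ge 0$ given above, but now comparing the probability measure $d\mu=V^{-1}e^{h}\omega^n$ against the \emph{two} normalizations in play: the one for $h$ and the one for $\theta_\xi$. Concretely, I would first record that, since $\int_M e^h\omega^n=V$, the measure $d\mu=V^{-1}e^{h}\omega^n$ is a probability measure, and that $V^{-1}\cH(\omega)=\int_M h\,d\mu$ while $V^{-1}H(\xi)=\int_M \theta_\xi\,d\mu$. Thus the assertion $\cH(\omega)\ge H(\xi)$ is equivalent to $\int_M(h-\theta_\xi)\,d\mu\ge 0$ for every $\xi\in Lie(G)$.

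Next I would bring in the normalization $\int_M e^{\theta_\xi}\omega^n=V$. Writing $e^{\theta_\xi}\omega^n=e^{\theta_\xi-h}\,e^h\omega^n$ and dividing by $V$ gives $\int_M e^{\theta_\xi-h}\,d\mu=1$. Applying Jensen's inequality to the convex function $t\mapsto e^{t}$ then yields
\[
1=\int_M e^{\theta_\xi-h}\,d\mu\ \ge\ \exp\Big(\int_M(\theta_\xi-h)\,d\mu\Big),
\]
so $\int_M(\theta_\xi-h)\,d\mu\le 0$, i.e. $H(\xi)\le \cH(\omega)$. Taking the supremum over $\xi\in Lie(G)$ gives $\cH(\omega)\ge\sup_{\xi}H(\xi)$, and invoking the cited result of \cite{TZ3} that $H$ is concave on $Lie(G)$ with a unique maximizer $\xi_0$, so that $\sup_{\xi}H(\xi)=H(\xi_0)$, completes the argument.

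Since the heart of the estimate is just Jensen's inequality, I do not expect a serious analytic obstacle; the points that require care are bookkeeping. First, one must check that $G$-invariance of $\omega$ is genuinely used: it is needed precisely so that a moment map exists and $\theta_\xi$ is a well-defined real function on $M$ admitting the normalization $\int_M e^{\theta_\xi}\omega^n=V$ (without invariance such a $\theta_\xi$ need not exist). Note that the Lie-algebra-homomorphism and concavity statements from \cite{TZ3} are \emph{not} needed for the pointwise inequality $\cH(\omega)\ge H(\xi)$ itself — only the normalization of $\theta_\xi$ is — they enter solely to identify the supremum with $H(\xi_0)$. Second, for the record one can note that equality in Jensen forces $\theta_\xi-h$ to be $d\mu$-a.e. constant, and since $\int_M e^{\theta_\xi-h}\,d\mu=1$ this constant is $0$, so $h=\theta_\xi$; together with \eqref{E-potential} this is exactly the statement that $\omega$ is a Kähler–Ricci soliton, consistent with the Euler–Lagrange analysis above.
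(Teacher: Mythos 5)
Your proof is correct and rests on the same elementary fact as the paper's: the convexity of $x\mapsto e^{x}$ combined with the two normalizations $\int_M e^{h}\omega^n=\int_M e^{\theta_\xi}\omega^n=V$. The paper packages this as the statement $F'(0)\le 0$ for the secant-comparison function $F(t)=\int_M e^{t\theta_\xi+(1-t)h}\omega^n-tV-(1-t)V$ (equivalently, integrating the tangent-line inequality $e^{\theta_\xi}\ge e^{h}+e^{h}(\theta_\xi-h)$), while you apply Jensen's inequality with respect to the probability measure $V^{-1}e^{h}\omega^n$; these are the same one-line estimate, and your equality discussion is a correct bonus.
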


\begin{proof}
Suppose $\omega$ is $G$-invariant and let $\xi\in Lie(G)$. Let $h, \theta_\xi$ be Ricci potential and Hamiltonian of $\xi$ with respect to $\omega$, satisfying the normalization
\[
\int_M e^h\omega^n=\int_M e^{\theta_\xi}\omega^n=V. 
\] 
We only need to show
\[
\int_M (h-\theta_\xi)e^h\omega^n\geq 0
\]
This follows from Proposition \ref{P-convexity} below. 
\end{proof}

The following elementary property will be important for us. 
\begin{prop}\label{P-convexity}Let $f, g$ be two continuous functions, then we have
\begin{equation}\label{E-convexity}
\int_M e^{g}(f-g)\omega^n\leq \int_M e^{f}\omega^n-\int_M e^{g}\omega^n\leq \int_M e^{f}(f-g)\omega^n
\end{equation}
\end{prop}

\begin{proof}
 Suppose $\varphi: \R\rightarrow \R$ is a (smooth) convex function, then for any $f, g: M\rightarrow \R$, we define $F: [0, 1]\rightarrow \R$ by
\[
F(t)=\int_M \varphi(tf+(1-t)g)\omega^n. 
\]
Then $F$ is convex since
\[
F^{''}(t)=\int_M \varphi^{''}(tf+(1-t)g)(f-g)^2\omega^n\geq 0. 
\]
Hence  by convexity,
\[
F^{'}(0)\leq F(1)-F(0)\leq F^{'}(1). 
\]
We have  $F(0)=\int_M \varphi(g) \omega^n, F(1)=\int_M \varphi(f)\omega^n$, $F^{'}(0)=\int_M \varphi^{'}(g)(f-g), F^{'}(1)=\int_M \varphi^{'}(f) (f-g)$. Taking $\varphi(t)=e^t$ we get \eqref{E-convexity}. 
\end{proof}

\subsection{Proof of Theorem \ref{T-noninvariant}}
 X.-X. Chen \cite{Chen09} and S. Donaldson \cite{Donaldson05} proved that the Calabi energy is bounded below by a natural invariant; an extremal metric, if exists, realizes such a lower bound. Chen uses  deep estimates of homogeneous complex Monge-Ampere equations in the space of Kahler potentials \cite{Chen00, Chentian05} and S. Donaldson \cite{Donaldson05} uses finite dimensional approximations (for projective manifolds). In Chen's argument, the geometric structure of the space of Kahler potentials plays an important role. 
We will mimic Chen's approach to prove Theorem \ref{T-noninvariant}. 

We first recall the space of Kahler potentials $\cH$,
\[
\cH=\{\phi\in C^\infty: \omega_\phi=\omega_0+\i \p\bar \p \phi>0\}.
\] 
The Mabuchi metric \cite{Mabuchi86} on $\cH$ is defined as, for $\psi_1, \psi_2\in T_\phi\cH$,
\[
\langle \psi_1, \psi_2\rangle_\phi=\int_M \psi_1\psi_2 \omega_\phi^n. 
\]
For any path $\phi(t)\in \cH$, the geodesic equation is given by
\begin{equation}\label{E-geodesic}
\ddot\phi-|\nabla\dot\phi|^2_{\omega_\phi}=0,
\end{equation}
where we use the complex notation of gradient and Laplacian etc. 
For any interval $I$ in $\R$, denote $U=I\times S^1$. We use $(z, w)$ to denote points on $M\times U$. Then the geodesic equation is equivalent to the homogeneous complex Monge-Ampere equation \cite{Semmes, Donaldson99} (assuming for each $w$, $\phi$ defines a strictly positive Kahler metric), 
\begin{equation}\label{E-hcma}
\Omega_\phi^{n+1}=0,
\end{equation}
where $\Omega_\phi=\pi^{*}\omega_0+\p\bar \p_{w, z} \phi$, $\pi: M\times U\rightarrow M$ is the projection onto $M$ and $\phi$ is regarded as a $S^1$ invariant function on $M\times U$.
A fundamental result of Chen \cite{Chen00} asserts that for $I=[0, 1]$ and $\phi_0, \phi_1\in \cH$, there exists a unique $C^{1, 1}$ solution of \eqref{E-hcma} in the sense that
\begin{equation}\label{E-chen}
\|\phi\|_{1, 1}:=\|\phi\|_{C^1}+\max\{|\p\bar\p_{w, z} \phi|\}\leq C.
\end{equation}
Here $\phi(w, z)$ is regarded as a function on $M\times U$.  We emphasize that Chen's  estimates rely on the fact that the two end points are actually smooth Kahler potentials in $\cH$. 
It is also useful to  consider generalized Kahler potentials. The minimal requirement is that  $\omega_\phi=\omega_0+\i\p\bar \p \phi$ defines a closed positive $(1, 1)$ current. If $\phi\in L^\infty$, then $\omega^n_\phi$ is a well-defined volume form such that \[\int_M \omega^n_\phi=\int_M \omega_0^n=V. \]  In this case we call $\phi$ a bounded Kahler potential and we denote the set of all bounded Kahler potentials as $\cH_{\infty}$.
We also consider generalized $C^{1, 1}$ Kahler potentials defined as
\[\cH_{1, 1}=\{\phi: \omega_\phi\geq 0, \|\phi\|_{C^1}<\infty, 0\leq n+\D\phi<\infty\}.\]
We also define the (weak) $C^{1, 1}$ norm  on $M$ as follows ($\phi$ is a function on $M$), fixing a background metric, 
\[\|\phi\|_{1, 1}^w=\|\phi\|_{C^1}+\max |\D\phi|.\]
If $I=[0, \infty)$ and $\phi(t)$ satisfies \eqref{E-hcma}, then $\phi(t)$ is called a geodesic ray. It is called a bounded geodesic ray if $\phi(t)\in \cH_\infty$ for each $t$ and it is called a $C^{1, 1}$ geodesic ray if $\phi(t)\in \cH_{1, 1}$ for each $t$. Note that we do not specify any condition on $\phi_{tt}$ (or $\phi_{w\bar w}$).

First we state an interesting result proved by Berndtsson \cite{Bern09},
\begin{prop}[Berndtsson]\label{P-norm}
Let $\phi(t), t\in [0, T]$ be the unique geodesic such that the end points $\phi(0), \phi(T)\in \cH$. Suppose $f: \R\rightarrow \R$ is a $C^1$ function such that 
\[
\int_M f(\dot \phi)\omega_\phi^n
\]
is integrable. Then  the above integral is a constant for $t\in [0, T]$.
\end{prop}

Now we are ready to prove Theorem \ref{T-noninvariant}. 
\begin{proof}[Proof of Theorem \ref{T-noninvariant}]To prove \eqref{E-noninvariant}, we only need to show that for any $\omega$ and $\xi\in G$, \begin{equation}\label{E-h1}
H(\omega)=\int_M he^h\omega^n\geq H_0(\xi)
\end{equation}
Fix a background metric $\omega_0$ and we assume $\omega_0$ is a $G$-invariant metric.
For any smooth K\"ahler metric  $\omega$, we write $\omega=\omega_0+\i \p\bar \p \phi$. The Ricci potentials are related by
\[h=h_0-\log\frac{\omega^n}{\omega^n_0}-\phi+constant.
\]
Hence we get,
\begin{equation}\label{E-measure}
e^h\omega^n=\l e^{h_0-\phi} \omega_0^n
\end{equation}
for some positive constant $\l$; by the normalization condition \eqref{E-hn}, we have
\[
\l=V\left(\int_M e^{h_0-\phi}\omega_0^n\right)^{-1}. 
\] 
The relation \eqref{E-measure} is fundamental for us. We will understand  $e^h\omega^n$ as in \eqref{E-measure}  for any closed positive $(1, 1)$ current $\omega\in [\omega_0]$ provided that $\phi$ is only assumed to be bounded.  For any $\xi\in Lie(G)$, we denote $\theta_\xi$ to be its Hamiltonian with respect to $\omega_0$; namely, 
\[
d\theta_\xi=\iota_\xi \omega_0, \int_M e^{\theta_\xi}\omega_0^n=V.
\]
By the definition of $H_0(\xi)$, we then need to show
\[
\int_M he^h\omega^n\geq H_0(\xi)=\int_M \theta_\xi e^{h_0}\omega_0^n. 
\]
Now let  $Y=-J\xi-i\xi$ be the holomorphic vector field and $\sigma_t$ be a one-parameter holomorphisms  generated by $Y$ ($\sigma_0=id$). Denote 
\[
\omega_{\rho(t)}=\sigma_t^{*}\omega_0=\omega_0+\i \p\bar\p \rho(t).\]  We choose $\rho(0)=0$ and $\rho(t), t\in (-\infty, \infty)$ is a smooth geodesic line such that
\begin{equation}\label{E-geodesic1}
\dot \rho=-\sigma_t^* \theta_\xi. 
\end{equation}
We pick up a geodesic segment $\phi(t)$ starting at $\phi$ (the geodesic  will be specified later).  By \eqref{E-convexity}, we know that
\[
\int_M (h-f)e^h\omega^n\geq \int_M e^h\omega^n-\int_M e^f\omega^n. 
\]
By choosing $f=-\dot \phi(0)$, we get (note that $\omega=\omega_{\phi(0)}$)
\begin{equation}\label{E-h2}
H(\omega)\geq -\int_M \dot\phi(0) e^h \omega^n+\int_M e^h\omega^n-\int_M e^{-\dot \phi(0)} \omega^n.
\end{equation}
If  $\omega$ is also $G$-invariant, then as above we can choose $\phi(t)$ (a geodesic line) starting at $\phi$ such that $\dot \phi(t)=-\sigma^{*}_t \tilde \theta_\xi$ (in particular, $\dot \phi(0)=-\tilde \theta_\xi$), where $\tilde \theta_\xi$ is the normalized Hamiltonian of $\xi$ with respect to $\omega$.  Then by  \eqref{E-h2}, we get $H(\omega)\geq \int_M\tilde \theta_\xi e^h\omega^n=H_0(\xi)$, as shown in Proposition \ref{P-invariant}. 

If $\omega$ is not $G$-invariant, the argument above does not apply. However given the geodesic line $\rho(t)$ generated by $Y=-J\xi-\i \xi$ through an invariant metric $\omega_0$, we construct the unique geodesic segment $\phi(t), t\in [0, T]$ such that $\phi(0)=\phi, \phi(T)=\rho(T)$ for any fixed $T$. We want to understand the asymptotic behavior when $T$ is large. Roughly speaking, such geodesic segments  converge to a geodesic ray starting at $\phi$ which is \emph{parallel} to $\rho(t)$ when $T\rightarrow \infty$. This intuitive statement can be made precise and strict as in Chen \cite{Chen00}. We will only consider those geodesic segments here since it is sufficient for our purpose. 

Along the geodesic segment $\phi(t)$ connecting $\phi$ and $\rho(T)$, we consider the function
\[
A_\phi(t)=\int_M \dot \phi e^{h(t)}\omega_t^n.
\]
An essential point for our argument is that $A_\phi(t)$ is monotone.  First we note that $A_\phi(t)$ is related to Ding's $\cF$-functional defined in \cite{Ding}. 
Fix a background metric $\omega_0$ and we write $\omega=\omega_0+\i\p\bar\p \phi$. Recall Ding's $\cF$-functional is defined by
\begin{equation}
\cF_{\omega_0}(\omega)=\cE_0(\phi)+\cF_0(\phi), 
\end{equation}
where $\cE_0(\phi)$ is the Aubin-Yau functional with the form \[
\cE_0(\phi)=-\frac{1}{(n+1)}\sum_{j=1}^n\int_M \phi \omega_0^j\wedge\omega_\phi^{n-j}. 
\] 
and $\cF_0(\phi)$ takes the form
\[
\cF_0(\phi)=-V\log \left(\int_M e^{h_0-\phi}\omega_0^n\right).
\]
The Aubin-Yau functional $\cE_0(\phi)$ can be characterized by its derivative along any ($C^1$ for example) path $\phi(t)$
\[
\frac{d\cE_0(\phi(t))}{dt}=-\int_M \dot\phi \omega_\phi^n.
\]
We can also compute the derivative of $\cF_0(\phi)$ as follows,
\begin{equation}
\frac{d}{dt}\cF_0(\phi)=V\left(\int_M e^{h_0-\phi}\omega_0^n\right)^{-1}\int_M \dot \phi e^{h_0-\phi}\omega_0^n.
\end{equation}
By \eqref{E-measure}, we obtain
\begin{equation}\label{E-derivative}
\frac{d}{dt}\cF_0(\phi)=\int_M \dot \phi e^{h_\phi}\omega^n_\phi
\end{equation}
and it agrees with $A_\phi(t)$ we have defined above. We can compute directly that (along a smooth curve $\phi(t)$), as in \eqref{E-Hconvex} below, 
\begin{equation}\label{E-2nd}
\frac{d^2}{dt^2}\cF_0(\phi)=\int_M (\ddot \phi-|\nabla\dot\phi|^2)e^{h_{\phi}}\omega^n_{\phi}+\int_M (|\nabla \dot\phi|^2-(\dot\phi+a(t))^2)e^{h_{\phi}}\omega^n_{\phi}. 
\end{equation}
By \eqref{Ricci-potential}, we compute
\[
\p\bar \p \dot h=\p\bar \p (-\Delta \dot \phi-\dot \phi). 
\]
Hence there exists a time-dependent constant $a(t)$ such that 
\begin{equation}\label{E-h}
\dot h=-\Delta \dot \phi-\dot \phi-a(t).
\end{equation}
By the normalization condition \eqref{E-hn}, we can then get 
\begin{equation}\label{E-a}
\int_M (\dot \phi+a(t))e^{h(t)} \omega^n_t=0. 
\end{equation}
Now we can compute directly, using \eqref{E-h} and \eqref{E-a}, that
\begin{equation}\label{E-Hconvex}
\begin{split}
\frac{d}{dt}A_\phi(t)=&\int_M \ddot \phi e^{h(t)}\omega_t^n+\int_M \dot \phi e^{h(t)} \dot h \omega^n_t+\int_M \dot \phi e^{h(t)} \Delta_t \dot \phi \omega^n_t\\
=&\int_M (\ddot \phi-\dot\phi^2-a(t)\dot\phi)e^{h(t)}\omega^n_t\\
=&\int_M (\ddot \phi-|\nabla\dot\phi|^2_t)e^{h(t)}\omega^n_t+\int_M (|\nabla \dot\phi|^2-(\dot\phi+a(t))^2)e^{h(t)}\omega^n_t. 
\end{split}
\end{equation}

By the modified Poincare inequality \eqref{E-poincare}, we know that $\cF_0(\phi)$ is convex along (smooth) geodesics. The convexity of $\cF_0(\phi)$ and $\cF$-functional is established by Berndtsson \cite{Bern06, Bern11} with only bounded Kahler potential, using his curvature formulas. We state Berndtsson's results for convenience (for more details including notations, we refer to \cite{Bern11}). 
Let $X$ be a projective manifold with semi-negative canonical line bundle ($-K_X\geq 0$) of dimension $n$ and let $U$ be a domain in $\C$. We use $w$ to denote the coordinate in $U$.  

\begin{thm}[Berndtsson] \label{Bern}Assume that $-K_X\geq 0$ and let $\phi_w$ be a curve of metrics on $-K_X$ such that 
\[
\i \p\bar\p_{w, X} \phi_w\geq 0
\]
in the sense of current on $X\times U$, Then
\[
F(w):=-
\log\int_X e^{-\phi_w}
\]
is subharmonic in $U$. If $\phi_w$ depends only on $t$,  the real part of $w$ ($w=t+is$), then $F$ is convex in $t$. Moreover, assume $H^{0, 1}(X)=0$ and $\phi_t$ is uniformly bounded in the sense that there is a smooth metric $\psi$ on $-K_X$ such that $|\psi-\phi_t|\leq C$. Then if $F(t)$ is a linear function of $t$ in the neighborhood of $0\in U$, then there exists a holomorphic vector field (possibly $t$-dependent) $V$ on $X$ with flow $\sigma_t$ such that
\[
\sigma_t^* (\p\bar\p \phi_t)=\p\bar \p \phi_0. 
\]
\end{thm}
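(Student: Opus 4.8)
The plan is to realize $e^{-F(w)}=\int_{X_w}e^{-\phi_w}$ as the squared $L^2$-norm of the tautological section of a direct image line bundle, to deduce subharmonicity (and convexity) from the positivity of that $L^2$-metric — i.e. from Berndtsson's theorem on positivity of direct images in the rank-one case — and to read off the rigidity statement from the equality case.

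\emph{Step 1: reduction to a curvature statement.} Let $\pi\colon X\times U\to U$ and $p\colon X\times U\to X$ be the projections. From $K_{X\times U/U}=p^*K_X$ we get $K_{X\times U/U}\otimes p^*(-K_X)\cong\mathcal{O}_{X\times U}$, so $E:=\pi_*\bigl(K_{X\times U/U}\otimes p^*(-K_X)\bigr)$ is the trivial line bundle $\mathcal{O}_U$, with a nowhere-vanishing holomorphic frame $u$ (the relative $n$-form valued in $-K_X$ that is $\equiv 1$ under the adjunction $K_X\otimes(-K_X)=\mathcal{O}$). Equip $p^*(-K_X)$ with the fibrewise metric $e^{-\phi_w}$; its curvature is $i\partial\bar\partial_{w,X}\phi_w\ge 0$, so this is a semipositively curved line bundle on $X\times U$, and the induced $L^2$-metric on $E$ gives $\|u\|_w^2=\int_{X_w}e^{-\phi_w}=e^{-F(w)}$ (up to a universal positive constant, suppressed). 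Hence the Chern curvature of $(E,\|\cdot\|)$ is $-i\partial\bar\partial\log\|u\|_w^2=i\partial\bar\partial F$, and the claim ``$F$ subharmonic'' (resp. ``$F$ convex in $t$'', since $\partial_w\partial_{\bar w}F=\tfrac14 F''(t)$ when $\phi_w=\phi_t$) is precisely the statement that this curvature is $\ge 0$.

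\emph{Step 2: positivity of the $L^2$-metric.} Assume first $\phi_w$ is smooth with $i\partial\bar\partial_{w,X}\phi_w>0$, and differentiate $\|u\|_w^2$ twice in $w$ under the integral. The resulting second variation is a priori indefinite, the trouble being a term quadratic in $\bar\partial_X\dot\phi$ ($\dot\phi=\partial_w\phi$). The device is to evaluate it using the \emph{minimal lift} of $\partial_w$: among smooth $(1,0)$-fields $V$ on $X_w$ with $d\pi\,V=\partial_w$ one minimizes the $e^{-\phi_w}$-norm of the $\bar\partial_X$-part; the minimizer is produced by solving a fibrewise $\bar\partial$-equation, solvable with the correct estimate by H\"ormander's weighted $L^2$-estimate on $X_w$ with the plurisubharmonic weight $\phi_w$. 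After this substitution, the Bochner--Kodaira--Nakano identity on the fibre recasts the second variation as a sum of nonnegative terms,
\[
\|u\|_w^2\,\partial_w\partial_{\bar w}F=\int_{X_w}\bigl\langle\, i\partial\bar\partial_{w,X}\phi_w\,(V_{\min},\overline{V_{\min}})\,\bigr\rangle\,e^{-\phi_w}+\bigl\|\bar\partial_X\beta\bigr\|^2_{X_w}\ \ge\ 0,
\]
where the first integrand is $\ge0$ exactly because $i\partial\bar\partial_{w,X}\phi_w\ge0$ and $\beta$ is the solution of the fibre equation; this is the rank-one case of Berndtsson's positivity-of-direct-images theorem. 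For $\phi_w$ merely bounded with $i\partial\bar\partial_{w,X}\phi_w\ge0$ as a current, approximate by a decreasing family of smooth $\phi_w^\varepsilon$ preserving the positivity, apply the smooth case, and pass to the limit, using that a decreasing limit of subharmonic functions is subharmonic while $F^\varepsilon\to F$ pointwise.

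\emph{Step 3: rigidity, and main obstacle.} Assume $H^{0,1}(X)=0$, $|\phi_t-\psi|\le C$, and $F$ affine in $t$ near $0$; then $i\partial\bar\partial F\equiv0$ there, so both nonnegative terms in the identity of Step 2 vanish. From $\bar\partial_X\beta\equiv0$ the fibre solution $\beta$ is holomorphic, and (using $H^{0,1}(X)=H^1(X,\mathcal{O}_X)=0$ to globalize, allowing $t$-dependence) the vertical part of the minimal lift is a genuine global holomorphic vector field $V=V_t$ on $X$, whose flow $\sigma_t$ is complete by the uniform bound $|\phi_t-\psi|\le C$. Vanishing of the second term, integrated in $t$, says exactly that $\sigma_t$ carries the volume form $e^{-\phi_t}$ to a constant multiple of $e^{-\phi_0}$: $\sigma_t^*(e^{-\phi_t})=c(t)\,e^{-\phi_0}$. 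Since $i\partial\bar\partial\phi_t$ is the Ricci form of the volume form $e^{-\phi_t}$, and the Ricci form is insensitive to scaling the volume form by a positive constant and natural under biholomorphisms,
\[
\sigma_t^*(\partial\bar\partial\phi_t)=\mathrm{Ric}\bigl(\sigma_t^*e^{-\phi_t}\bigr)=\mathrm{Ric}\bigl(c(t)e^{-\phi_0}\bigr)=\partial\bar\partial\phi_0,
\]
as required. The substantive step is Step 2: the curvature computation and, above all, the verification that replacing an arbitrary lift by the minimal one turns the indefinite second variation into a sum of nonnegative terms — this is Berndtsson's positivity theorem, resting on the fibrewise Bochner--Kodaira--Nakano identity together with H\"ormander's weighted $L^2$-estimate to produce $\beta$ with the right norm. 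Secondary but real difficulties: $\phi_w$ has only current regularity, so the whole argument must survive regularization; and in the rigidity part the delicate points are globalizing the fibrewise-holomorphic minimal lift to a holomorphic vector field on $X$ (where $H^{0,1}(X)=0$ enters) and recognizing that the single remaining vanishing term is precisely the measure-transport identity.
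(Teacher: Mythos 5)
The paper does not actually prove this statement: Theorem \ref{Bern} is quoted from Berndtsson \cite{Bern06, Bern11} as an external input (``we state his results for convenience''), so there is no internal proof to compare against. Judged on its own terms, your outline is a faithful reconstruction of Berndtsson's strategy: identify $e^{-F(w)}$ with the squared $L^2$-norm of the canonical frame of $\pi_*\bigl(K_{X\times U/U}\otimes p^*(-K_X)\bigr)\cong\cO_U$, deduce subharmonicity of $F$ from nonnegativity of the curvature of this $L^2$-metric via the minimal lift and the fibrewise Bochner--Kodaira identity, and extract rigidity from the equality case.

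As a proof, however, it has three genuine gaps, all located where the actual mathematics lives. First, the displayed second-variation identity in your Step 2 --- the decomposition of $\|u\|_w^2\,\partial_w\partial_{\bar w}F$ into $\int i\partial\bar\partial_{w,X}\phi_w(V_{\min},\overline{V_{\min}})\,e^{-\phi_w}$ plus $\|\bp_X\beta\|^2$ --- is asserted rather than derived, and it \emph{is} the theorem: the whole difficulty is verifying that substituting the minimal lift cancels the indefinite cross terms, which requires actually solving the fibrewise $\bar\partial$-equation with the correct H\"ormander estimate and carrying out the integration by parts. Second, the regularization step is not as stated: a plurisubharmonic metric on a merely semipositive line bundle over a compact manifold cannot in general be written as a decreasing limit of smooth metrics with semipositive curvature; Demailly-type regularization loses an $\varepsilon\omega_0$ of positivity, and Berndtsson's argument must track this loss (obtaining $F$ as a limit of functions that are only $\varepsilon$-subharmonic) rather than assume it away. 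Third, in the rigidity step, completeness of the flow is automatic because $X$ is compact, so the hypothesis $|\phi_t-\psi|\le C$ is not doing the work you assign to it; it is needed to make the candidate vector field --- defined by contracting $\bp\phi_t$ against the possibly degenerate form $i\partial\bar\partial\phi_t$ --- well defined and controlled, while $H^{0,1}(X)=0$ enters in promoting the fibrewise holomorphic object to a global holomorphic field. None of this invalidates the outline, but each point is a theorem-sized step that your sketch delegates back to \cite{Bern11}.
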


Given the interpretation of $A_\phi$ as the derivative of $\cF_0$, one can give a direct proof of the following fact.

\begin{lem}We have the following estimate
\begin{equation}\label{E-aineq}
A_\phi(0)=\int_M \dot\phi(0)e^h\omega^n \leq -H_0(\xi)+O(T^{-1}). 
\end{equation}
\end{lem}
\begin{proof}We know that $A_\phi(t)=\frac{d}{dt}\cF_0$. By the convexity of $\cF_0$, we know that
\[
A_\phi(0)\leq \frac{\cF_0(\phi(T))-\cF_0(\phi_0)}{T}
\]
Since $\phi(T)=\rho(T)$ and we know that $A_\rho(t)=-H_0(\xi)$ for any $t$ (equivalently, $\cF_0(\rho(t))$ is an affine function), we then have
\[
A_\phi(0)\leq \frac{\cF_0(\rho(T))-\cF_0(\rho_0)}{T}+O(T^{-1})=-H_0(\xi)+O(T^{-1}). 
\]
\end{proof}

\begin{rmk}This argument was pointed out by the referee which simplifies our original arguments. \end{rmk}

Hence by \eqref{E-h2} and \eqref{E-aineq} we obtain,
\begin{equation}
H(\omega)\geq H_0(\xi)+V-\int_M e^{-\dot \phi(0)} \omega^n+O(T^{-1}). 
\end{equation}
By Proposition \ref{P-norm}, $\int_M e^{-\dot \phi}\omega^n_t$ is constant. Hence we obtain, 
 \begin{equation}\label{E-h3}
 H(\omega)\geq H_0(\xi)+V-\int_M e^{-\dot \phi(T)}\omega^n_{\phi(T)}
 \end{equation}
When $T$ is large enough, we want to show $\dot \phi(T)\approx \dot \rho(T)$ in an effective way. To be more precise,  
we will show in Lemma \ref{L-comparison} that
\begin{equation}\label{E-comparison}
\int_M (\dot\phi(T)-\dot\rho(T))^2 \omega^n_{\rho(T)}=O(T^{-2})
\end{equation}
With this approximation, we will establish the following approximation in Lemma \ref{L-h6} below, 
\begin{equation}\label{E-h6}
\left|\int_M e^{-\dot \rho(T)} \omega^n_{\rho(T)}-\int_M e^{-\dot \phi(T)}\omega^n_{\phi(T)}\right|=O(T^{-1}).
\end{equation}
We then obtain, by \eqref{E-h3} and \eqref{E-h6}, 
\begin{equation}\label{E-h4}
H(\omega)\geq  H_0(\xi)+V-\int_M e^{-\dot \rho(T)}\omega^n_{\rho(T)}+O(T^{-1})
\end{equation}
By the definition of $\rho(t)$, in particular $\eqref{E-geodesic1}$, we have,
\[
\int_M e^{-\dot \rho(T)}\omega^n_{\rho(T)}=V.
\]
By letting $T\rightarrow \infty$, \eqref{E-h4} then implies 
\[
H(\omega)\geq H_0(\xi).
\]
This completes the proof. 
\end{proof}

Next we establish \eqref{E-comparison} and \eqref{E-h6} in the proof of Theorem \ref{T-noninvariant}. 
First we follow the comparison geometric argument in Chen \cite{Chen09} to establish \eqref{E-comparison}. The argument relies on the results established in Chen \cite{Chen00} and Calabi-Chen \cite{CalabiChen} asserting that the space of Kahler potentials with Mabuchi metric is actually an Alexanderov space of nonpositive curvature.

\begin{lem}\label{L-comparison}We have the estimate \eqref{E-comparison}.
\end{lem}

\begin{proof}

Consider the triangle in $\cH$ with vertices $B=0, C=\phi_0, D=\phi(T)=\rho(T)$. Denote the distance $|BC|=d$ and along geodesic segment $\phi(t)$, $\int_M \dot \phi^2\omega_\phi$ is constant for any $t\in [0, T]$ and we assume both are nonzero for ${BD}, {CD}$.
Then we have
\[
|BD|^2=T^2 \int_M \dot \rho^2 \omega^n_\rho, |CD|^2=T^2\int_M \dot \phi^2 \omega_\phi^n.
\]
Let $\tilde B, \tilde C, \tilde D$ be the vertices of an Euclidean triangle with the same length as $BCD$ and denote the angle at $\tilde D$ by $\tilde \theta$. Then 
\[
\cos \tilde \theta=(|BD|^2+|CD|^2-d^2)/2|BD||CD|. 
\]
The tangent vector at $D$ for $BD$ is given by $\dot \rho(T)$, for $CD$ by $\dot \phi(T)$. The inner product of two vectors is then
\[
(\dot \phi(T), \dot \rho(T))=\int_M \dot \phi \dot \rho \omega^n_{\phi(T)}. 
\]
Hence the angle $\theta$ formed at $D$ of the triangle $BCD$ is given by
\[
\cos \theta=\frac{\int_M \dot \phi \dot \rho \omega^n_{\phi(T)}}{\left(\int_M \dot \phi^2 \omega_{\phi(T)}^n\int_M \dot \rho^2 \omega_{\phi(T)}^n\right)^{1/2}}
\]
Since $\cH$ has nonpositive curvature, we know $\theta\leq \tilde \theta$, hence $0<\cos \tilde \theta\leq \cos \theta\leq 1$ (note that we consider $T$ really large, hence $\tilde \theta$ is small). It follows that
\begin{equation}\label{E-triangle0}
T^2\int_M (\dot \phi(T)-\dot \rho(T))^2 \omega^n_{\phi(T)}\leq d^2. 
\end{equation}
This proves \eqref{E-comparison}.
\end{proof}

\begin{lem}\label{L-h6}The approximation \eqref{E-h6} holds.
\end{lem}

\begin{proof}First we apply Proposition \ref{P-convexity} to get
\begin{equation}\label{E-h8}
\int_M e^{-\dot \phi}(\dot \phi-\dot \rho)\omega^n_{\rho}\leq \int_M (e^{-\dot \rho}-e^{-\dot \phi})\omega^n_{\rho}\leq \int_M e^{-\dot \rho} (\dot\phi-\dot \rho) \omega^n_{\rho}.
\end{equation}
We omit the subscript $T$ in above for simplicity. By H\"older inequality, we have
\[
\left|\int_M e^{-\dot \rho} (\dot\phi-\dot \rho) \omega^n_{\rho}\right|^2\leq\int_M e^{-2\dot \rho}\omega^n_\rho \int_M (\dot \phi-\dot \rho)^2\omega^n_\rho.  
\]
Observe that $\dot \rho=-\sigma^{*}_t\theta_\xi=-\theta_\xi \circ \sigma_t$ and hence it is uniformly bounded. It then follows from Lemma \ref{L-comparison} that 
\[
\left|\int_M e^{-\dot \rho} (\dot\phi-\dot \rho) \omega^n_{\rho}\right|\leq CT^{-1},
\]
where $C$ depends only on $\omega_0, \xi$. Similarly we have
\begin{equation}\label{E-h7}
\left|\int_M e^{-\dot \phi} (\dot\phi-\dot \rho) \omega^n_{\rho}\right|^2\leq\int_M e^{-2\dot \phi}\omega^n_\rho \int_M (\dot \phi-\dot \rho)^2\omega^n_\rho.  
\end{equation}
Along the geodesic segment we know that $\ddot \phi$ is nonnegative ($\phi$ is convex in $t$), hence 
\[
\dot \phi(T)\geq \left(\phi(T)-\phi(0)\right) T^{-1}=\rho(T)T^{-1}-\phi_0 T^{-1}
\]
It follows that 
\[
-\dot \phi(T)\leq -\rho(T) T^{-1}+\phi_0 T^{-1}
\]
Since $\dot \rho=-\theta_\xi \circ \sigma_t$ is uniformly bounded, we have that
$|\rho(t)|\leq Ct$ for any $t$.  It follows that $-\dot \phi(T)\leq C$ for some uniform constant $C$ (we assume $T\geq 1$), depending only on $\phi_0$ and $\theta_\xi$. Hence by \eqref{E-h7} and \eqref{E-comparison}, we have
\[
\left|\int_M e^{-\dot \phi} (\dot\phi-\dot \rho) \omega^n_{\rho}\right|\leq CT^{-1}. 
\]
This completes the proof of \eqref{E-h6}. 
\end{proof}

We  include a proof of the monotonicity of $A_\phi(t)$ based on the modified Poincare inequality  (see \eqref{E-Hconvex}) along $C^{1, 1}$ geodesics. We hope this alternative proof is interesting in its own right. 

\begin{lem}\label{L-mono}The function $A_\phi(t)$ is monotone increasing along a geodesic segment.
\end{lem}

\begin{proof}
By the observation in \eqref{E-measure} above $e^h\omega^n$ is well-defined for any bounded generalized Kahler potential.  First suppose $\phi(t)$ is a smooth geodesic path of Kahler potentials,  then  by \eqref{E-derivative} and \eqref{E-2nd}, $A_\phi(t)$ is monotone increasing by Proposition \ref{Futaki} (\eqref{E-poincare}).
Now suppose $\phi_0, \phi_T$ are two smooth Kahler potentials in $[\omega_0]$ and we consider the $C^{1, 1}$ geodesic segment $\phi(t)$ connecting the given two Kahler potentials.  We use an approximation argument. For any $\epsilon>0$, there exists an approximating smooth geodesic $\phi_\epsilon(t)$ and we can associate such a path
\begin{equation}\label{E-Afunction}
A_\epsilon(t)=\int_M \dot \phi_\epsilon(t) e^{h_\epsilon(t)}\omega^n_\epsilon(t)=\l_\epsilon(t)\int_M \dot \phi_\epsilon(t) e^{h_0-\phi_{\epsilon}(t)}\omega^n_0,\end{equation}
where $\l_\epsilon(t)$ is a time dependent constant such that 
\[
\l_\epsilon(t)\int_M  e^{h_0-\phi_{\epsilon}(t)}\omega^n_0=V.
\]
A direct computation as above shows that
\[
\frac{dA_\epsilon }{dt}=\int_M (\ddot \phi_\epsilon-|\nabla \dot\phi_\epsilon|^2) e^{h_\epsilon}\omega_\epsilon^n+\int_M \left(|\nabla \dot \phi_\epsilon|^2-(\dot \phi_\epsilon+a_\epsilon(t))^2\right)e^{h_\epsilon}\omega_\epsilon^n>0.
\]
It then follows that
\[
A_\epsilon(0)<A_\epsilon(T). 
\]
Since $\phi_\epsilon(t)\rightarrow \phi(t)$ in $C^{1, \alpha}$ when $\epsilon\rightarrow 0$, we have
\[
A_\epsilon(t)=A_\phi(t)+o(\epsilon).
\]
The desired monotonicity follows by letting $\epsilon\rightarrow 0$. 
\end{proof}

In general, one can associate an invariant for any geodesic rays and obtain a lower bound of $H(\omega)$ in terms of such an invariant with some extra efforts. Suppose $\rho(t)$ is a geodesic ray with $\rho(t)\in \cH_\infty$. By an observation of Berndtsson \cite{Bern11},  $\rho(t)$ is Lipschitz in $t$-direction, and hence $\dot\rho$ is a $L^\infty$ function in $t$. 
\begin{defi}Let $\rho(t)$ be a geodesic ray with $\rho(t)\in \cH_\infty$, define the function
\[
Y_\rho(t)=-\int_M \dot \rho e^{h_\rho}\omega^n_\rho-V\log\left(V^{-1}\int_M e^{-\dot\rho}\omega^n_\rho\right),
\]
where we understand the notion as follows, 
\[
e^{h_\rho}\omega^n_\rho=\l e^{h_0-\rho}\omega_0^n, \;\mbox{with}\; \l=V\left(\log \int_M e^{h_0-\phi}\omega^n_0\right)^{-1}.
\]
\end{defi}
By the discussion above, we know that
\[
\frac{d\cF_0(\rho(t))}{dt}=\int_M \dot \rho e^{h_\rho}\omega^n_\rho
\]
and hence it is increasing by Berndtsson's result. Berndtsson's result (Proposition \ref{P-norm}) holds in a more general setting. Hence 
 $Y_\rho(t)$ is a decreasing function and we can define
\[
Y_\rho=\lim_{t\rightarrow \infty} Y_\rho(t). 
\]
Using the very similar ideas as above, one can obtain that, for any geodesic ray $\rho$ (with some mild assumption, say $|\rho(t)|\leq Ct$ and $\rho(t)$ is $C^1$), \[H(\omega)\geq Y_\rho.\]
We shall not pursue this generality here. 
\subsection{Modified $\cF$-functional}

When the Futaki invariant is not zero, or equivalently, the extremal holomorphic vector field $X$ is not zero, Tian-Zhu \cite{Tianzhu00} introduced the notion of modified $\cF_X$ functional. This functional is important for the study of Kahler-Ricci solitons. 
Recall that the $\cF_X$ functional is defined by, given any path $\phi(t)$ connecting $\omega$ and $\omega_\phi=\omega+\p\bar\p \phi$, 
\begin{equation}
\cF_X(\phi)=-\int_0^1 \int_M \dot \phi_t e^{\theta_X(\phi_t)} \omega_{\phi_t}^n dt-V\log \left(\frac{1}{V}\int_M e^{h_g-\phi} \omega^n\right), 
\end{equation}
where $h$ is the Ricci potential of $\omega$, and $\theta_X(\omega)$ is the potential of $X$ with respect to $\omega$ ($\iota_X\omega=\i \bar \p \theta_X$), both satisfying the normalization,
\[
\int_M e^h \omega^n=\int_M e^{\theta_X} \omega^n=\int_M \omega^n=V.
\]
Tian-Zhu proved that $\cF_X$ is independent of the path and 
its first  variation formula of $\cF_X$ is given by, 
\begin{equation}\label{E-Fvariation1}
\frac{d\cF_X}{dt}=- \int_M \dot \phi e^{\theta_X(\phi)}\omega_\phi^n+V\left(\int_M e^{h-\phi}\omega^n\right)^{-1}\int_M \dot \phi e^{h-\phi}\omega^n.
\end{equation}
The Euler-Lagragian equation is given by
\[
e^{\theta_X(\phi)}\omega^n_\phi=Ve^{h-\phi}\omega^n \left(\int_M e^{h-\phi}\omega^n \right)^{-1},
\]
which is exactly the equation for a Kahler-Ricci soliton with extremal vector field $X$.
By \eqref{E-measure}, we can write \eqref{E-Fvariation1} as
\begin{equation}\label{E-F2}
\frac{d\cF_X}{dt}=- \int_M \dot \phi e^{\theta_X(\phi)}\omega_\phi^n+\int_M \dot\phi e^{h_\phi}\omega^n_\phi. 
\end{equation}

We can then compute the  second derivative of $\cF_X$, 
\begin{prop}The second variation of $\cF_X$ is given by
\begin{equation}\label{E-Fvariation2}
\begin{split}
\frac{d^2 \cF_X}{dt^2}=&-\int_M (\ddot \phi-|\nabla \dot \phi|_\phi^2)\left(e^{\theta_X(\phi)}\omega_\phi^n-Ve^{h-\phi}\omega^n\right)\\
&+\int_M (|\nabla \dot \phi|^2_\phi-|\dot \phi+a(t)|^2) e^{h_\phi}\omega^n_\phi.
\end{split}
\end{equation}
\end{prop}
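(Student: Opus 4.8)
The plan is to differentiate the first variation formula \eqref{E-Fvariation1} one term at a time. Write $d\mu_X = e^{\theta_X(\phi_t)}\omega_{\phi_t}^n$ for the twisted volume form, which has total mass $V$ for all $t$ by the normalization of $\theta_X$, and $d\nu = e^{h-\phi}\omega^n$ for the measure in the logarithmic term (here $h=h_g$ and $\omega=\omega_0$ are the fixed background data, so only $\phi$ varies in $d\nu$). Then $\frac{d\cF_X}{dt} = T_1 + T_2$ with $T_1 = -\frac1V\int_M \dot\phi\, d\mu_X$ and $T_2 = \bigl(\int_M d\nu\bigr)^{-1}\int_M\dot\phi\, d\nu$, and the goal is to compute $\dot T_1$ and $\dot T_2$ separately and add them.

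For $\dot T_1$ the essential input is the evolution of the twisted volume form. From $\iota_X\omega_\phi = \i\bp\theta_X(\phi)$ and the holomorphy of $X$ one gets $\p_t\theta_X(\phi_t) = X(\dot\phi) + c(t)$ for a time-dependent constant $c(t)$; combined with $\p_t\log(\omega_\phi^n/\omega_0^n) = \Delta_\phi\dot\phi$ this gives $\p_t d\mu_X = \bigl(\Delta_X\dot\phi + c(t)\bigr) d\mu_X$, where $\Delta_X := \Delta_\phi + X$ is the drift Laplacian of $d\mu_X$. Since $\Delta_X$ is self-adjoint with respect to $d\mu_X$ and $\int_M\Delta_X u\, d\mu_X = 0$ (this is the standard weighted operator underlying the modified Poincar\'e inequality already used in Section 2), differentiating $\int_M d\mu_X = V$ forces $c(t)\equiv 0$. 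Differentiating $T_1$ and integrating by parts with $\int_M u\,\Delta_X u\, d\mu_X = -\int_M|\nabla u|^2_\phi\, d\mu_X$ applied to $u = \dot\phi$ then yields $\dot T_1 = -\frac1V\int_M(\ddot\phi - |\nabla\dot\phi|^2_\phi)\,e^{\theta_X(\phi)}\omega_\phi^n$; this is the analogue for the twisted measure of the computation behind \eqref{E-Hconvex}.

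For $\dot T_2$, since $\p_t d\nu = -\dot\phi\, d\nu$, a direct differentiation gives $\dot T_2$ as a $\ddot\phi$-term, a $-\dot\phi^2$-term, and a quadratic correction $\bigl(\int_M\dot\phi\, d\nu\,/\,\int_M d\nu\bigr)^2$. Now impose the normalization \eqref{E-normal1}, $\int_M d\nu = V$, and recall (exactly as for $\cF$ in the proof of Theorem \ref{T-semistable}) that $\cF_X$ and all its $t$-derivatives are unchanged when $\phi(t)$ is replaced by $\phi(t)+at+b$; choosing $a,b$ so that at the point under consideration one also has $\int_M\dot\phi\, d\nu = 0$ kills the quadratic correction and leaves $\dot T_2 = \frac1V\int_M\ddot\phi\, d\nu - \frac1V\int_M\dot\phi^2\, d\nu$. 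Using the measure identity $e^{h_\phi}\omega_\phi^n = e^{h-\phi}\omega^n = d\nu$, valid under \eqref{E-normal1} (the constant $\lambda$ in $e^{h_\phi}\omega_\phi^n = \lambda e^{h-\phi}\omega^n$ equals $1$), rewrite $\dot T_2 = \frac1V\int_M(\ddot\phi - |\nabla\dot\phi|^2_\phi)\, d\nu + A$. Adding $\dot T_1 + \dot T_2$ and collecting the two $(\ddot\phi - |\nabla\dot\phi|^2_\phi)$-terms into $-\frac1V\int_M(\ddot\phi-|\nabla\dot\phi|^2_\phi)\bigl(e^{\theta_X(\phi)}\omega_\phi^n - e^{h-\phi}\omega^n\bigr)$ gives the claimed formula.

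The genuinely delicate points are the evolution of $\theta_X$ along the path and the vanishing of $c(t)$: these require using the holomorphy of $X$ together with the self-adjointness of $\Delta_X$ with respect to the twisted measure $d\mu_X$, in the spirit of Tian--Zhu's modified Futaki identities. The other bit of care is the bookkeeping of normalizations that licenses dropping the quadratic correction in $\dot T_2$; this rests on the invariance of $\cF_X$ and its derivatives under adding a linear function of $t$ to $\phi$. Everything else is the routine first- and second-order variation calculation already met in Section 2.
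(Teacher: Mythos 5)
Your computation is correct and is exactly the ``direct computation by taking the derivative of \eqref{E-Fvariation1}'' that the paper's proof invokes, using the same two inputs: the normalization \eqref{E-normal1} (which both forces $\int_M\dot\phi\,e^{h-\phi}\omega^n=0$, i.e.\ \eqref{E-normal2}, and gives the measure identity $e^{h_\phi}\omega_\phi^n=e^{h-\phi}\omega^n$ via \eqref{E-riccipotential}), and the self-adjointness of the drift Laplacian with respect to $e^{\theta_X(\phi)}\omega_\phi^n$, which both kills the constant $c(t)$ and produces the $|\nabla\dot\phi|^2_\phi$ term in the twisted piece. The paper omits these details, so your write-up simply supplies the calculation it leaves implicit; no gap.
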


\begin{proof}We compute
\[
\begin{split}
\frac{d}{dt}\int_M \dot \phi e^{\theta_X(\phi)}\omega_\phi^n=&\int_M (\ddot \phi+\dot \phi X(\dot \phi)+\dot \phi\Delta \dot \phi)e^{\theta_X(\phi)}\omega^n_\phi\\
=&\int_M (\ddot\phi-|\nabla \dot \phi|^2) e^{\theta_X(\phi)}\omega^n_\phi,
\end{split}
\]
where we use integration by parts and the fact that $\nabla_\phi \theta_X(\phi)=X$. The other part is exactly the same as the computation of $\cF$-functional. 
\end{proof}

Invoking Berndtsson \cite{Bern11}, we have the following, 
\begin{prop}
The $\cF_X$ functional is  convex along any $C^{1, 1}$ geodesic $\phi(t)\in \cH_{1, 1}$. If $\cF_X(\phi(t))$ is a linear function in $t$, then there exists a one-parameter holomorphism $\sigma_t: M\rightarrow M$ such that \[\sigma_{\phi(t)}=\sigma_t^{*}\omega_{\phi_0}.\]
\end{prop}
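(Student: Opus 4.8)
The plan is to reduce the statement to the already-quoted result of Berndtsson (Theorem \ref{Bern}) by interpreting the modified $\cF_X$ functional as (minus) the logarithm of an integral of $e^{-\Phi_w}$ over $X$ for a suitable metric $\Phi_w$ on $-K_X$ which is positively curved over $X\times U$ precisely when $\phi(t)$ is a (sub)geodesic. First I would recall the standard correspondence between $S^1$-invariant Kahler potentials $\phi(t)$ on $M$ and metrics $\phi_w$ on the anticanonical bundle: via the formula $e^h\omega^n = \l e^{h_0-\phi}\omega_0^n$ already displayed in the excerpt, the measure $e^{h_0-\phi}\omega_0^n$ is the $(n,n)$-form associated to a metric on $-K_M$, and the curvature condition $\i\p\bar\p_{w,X}\Phi_w\ge 0$ is exactly $\Omega_\phi\ge 0$, i.e. the (sub)geodesic condition for $\phi(t)\in\cP_{1,1}$. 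Applying the first part of Theorem \ref{Bern} (Berndtsson's subharmonicity/convexity) then gives that $-\log\int_M e^{h_0-\phi(t)}\omega_0^n$ is convex in $t$. Since $\cE_0(\phi(t))$ is linear along bounded geodesic segments (the fact quoted from \cite{Bern11} and used in the proof of Theorem \ref{T-semistable}), the Kahler-Einstein piece of $\cF$ accounts for all of the convexity in the unmodified case.

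The genuinely new ingredient here is the extra term $-\frac{1}{V}\int_0^1\int_M \dot\phi_t e^{\theta_X(\phi_t)}\omega_{\phi_t}^n\,dt$ coming from the vector field $X$. The key step is to absorb this term into a twisted weight. Concretely, I would write $\theta_X(\phi_t)$ in terms of a fixed holomorphic vector field data, note that along the flow $\sigma_t^\ast$ generated by $X$ the potential $\theta_X$ transforms in a controlled way, and observe that the combination $e^{\theta_X(\phi)-\phi}\omega^n$ (or an appropriate twist thereof) is again the $(n,n)$-form of a metric on $-K_M$, with curvature controlled by $\Omega_\phi$. In other words: the modified $\cF_X$ is, up to the linear-in-$t$ term $\cE_0$, of the form $-\log\int_M e^{-\Phi_w^X}$ for a metric $\Phi_w^X$ on $-K_M$ whose positivity is again equivalent to the (sub)geodesic condition. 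This is morally why the proposition says "invoking Berndtsson": once the weight is set up, convexity is immediate from Theorem \ref{Bern}, and the rigidity statement (linearity $\Rightarrow$ existence of $\sigma_t$ with $\sigma_t^\ast\omega_{\phi(t)}=\omega_{\phi_0}$) follows from the second part of Theorem \ref{Bern}, using that $H^{0,1}(M)=0$ on a Fano manifold and that $\dot\phi$ is uniformly bounded for $C^{1,1}$ geodesics so the uniform-boundedness hypothesis on $\phi_t$ holds.

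For the rigidity conclusion I would argue as follows. If $\cF_X(\phi(t))$ is linear in $t$, then since $\cE_0$ contributes a linear term and the remaining part is convex, the convex part $-\log\int_M e^{-\Phi_t^X}$ must itself be affine, hence in particular linear on a subinterval; Berndtsson's rigidity then produces a (possibly $t$-dependent) holomorphic vector field $V$ with flow whose pullback identifies $\p\bar\p\Phi_t^X$ with $\p\bar\p\Phi_0^X$. Unwinding the twist by $X$ then yields a one-parameter family of holomorphic automorphisms $\sigma_t$ of $M$ with $\sigma_t^\ast\omega_{\phi(t)}=\omega_{\phi_0}$, which is the asserted conclusion (the displayed formula $\sigma_{\phi(t)}=\sigma_t^\ast\omega_{\phi_0}$ in the statement, with the evident typo).

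The main obstacle I expect is the second step: verifying cleanly that the modified weight $\Phi_w^X$ built from $\theta_X(\phi)$ really is a bona fide metric on $-K_M$ whose curvature over $X\times U$ is nonnegative iff $\phi(t)$ is a subgeodesic, and that this works at merely $C^{1,1}$ regularity (so that $\theta_X(\phi)$, which involves first derivatives of $\phi$, is defined and the Monge-Ampere measures $e^{\theta_X(\phi)}\omega_\phi^n$ and $e^{h_\phi}\omega_\phi^n$ make sense in the pluripotential/Bedford-Taylor sense). One must check that the transformation of $\theta_X$ under change of potential does not destroy the current positivity, and that all the normalization constants track correctly through the flow. The smooth computation in Proposition (the second-variation formula \eqref{E-Fvariation2} together with the modified Poincare inequality) serves as the formal check that the answer is right; the content of "invoking Berndtsson" is that his complex-analytic machinery upgrades this formal convexity to the weak-regularity setting, exactly as in the unmodified case treated in Theorem \ref{T-semistable}.
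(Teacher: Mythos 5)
Your high-level strategy---reduce everything to Berndtsson's Theorem \ref{Bern}---is the same as the paper's, and you correctly sense that the boundedness of $\theta_X(\phi)$ at $C^{1,1}$ regularity is the crux. But the specific reduction you propose is structurally off, and this is a genuine gap. You want to ``absorb'' the soliton term $-\frac{1}{V}\int_0^1\int_M\dot\phi_t e^{\theta_X(\phi_t)}\omega_{\phi_t}^n\,dt$ into a twisted weight so that $\cF_X$ becomes, ``up to the linear-in-$t$ term $\cE_0$,'' of the form $-\log\int_M e^{-\Phi^X_w}$. Two problems. First, $\cF_X$ does not contain $\cE_0$ at all: its energy part is the $e^{\theta_X}$-weighted energy, not $\cE_0$. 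Second, that weighted energy is a path integral of a one-form on $\cP$, not the logarithm of an integral over $M$, so it cannot be produced by choosing a clever metric $\Phi^X_w$ on $-K_M$ inside Berndtsson's log-integral. The actual structure is the opposite of what you propose: the logarithmic term of $\cF_X$, namely $-\log\left(\frac{1}{V}\int_M e^{h-\phi}\omega^n\right)$, is \emph{identical} to that of the unmodified Ding functional, so Berndtsson's convexity and rigidity apply to it verbatim, with no twisting whatsoever.

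The place where new input is needed is the weighted energy term: one must show it is \emph{affine} (not merely convex) along weak geodesics. Its second $t$-derivative is
\[
-\frac{1}{V}\int_M(\ddot\phi-|\nabla\dot\phi|^2)\,e^{\theta_X(\phi)}\omega_\phi^n,
\]
which is the fiberwise pushforward of $\Omega_\phi^{n+1}$ tested against the weight $e^{\theta_X(\phi)}$; this vanishes when $\Omega_\phi^{n+1}=0$ in the Bedford--Taylor sense \emph{provided} the weight is well defined and uniformly bounded for merely $C^{1,1}$ potentials. That uniform bound on $\theta_X(\phi)$ is exactly the single additional ingredient the paper isolates, quoting Zhu \cite{Zhu00} and noting the estimate extends to $C^{1,1}$ potentials. (Note also the sign issue your route would create: for a mere subgeodesic, $\ddot\phi-|\nabla\dot\phi|^2\geq 0$ makes the weighted energy \emph{concave}, the wrong direction; only the geodesic equation kills this term, which is why it must be handled as an affine piece rather than folded into the convex piece.) Once the decomposition ``affine weighted energy plus unmodified Berndtsson-convex log term'' is in place, convexity is immediate, linearity of $\cF_X$ forces linearity of the log term, and the rigidity clause of Theorem \ref{Bern} (with $H^{0,1}(M)=0$ since $M$ is Fano, and $\dot\phi$ bounded) yields the flow $\sigma_t$; that part of your argument is essentially correct.
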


\begin{proof}The only thing we need in addition is that $\theta_X(\phi)$ is uniformly bounded. This follows from \cite{Zhu00} and the proof can be extended directly to $C^{1, 1}$ potentials. The statement then follows Berndtsson \cite{Bern11}. 
\end{proof}

\begin{rmk}
As a consequence of this convexity one can give a proof of Tian-Zhu's  result \cite{Tianzhu00} on the uniqueness of Kahler-Ricci soliton in $(M, [\omega_0])$. The only additional fact we need is that the extremal vector field for Kahler-Ricci soliton is unique up to automorphisms \cite{Tianzhu02}.  In Kahler-Einstein case, Berman \cite{Berman} and Berndtsson \cite{Bern11} have already given a new proof of Bando-Mabuchi's uniqueness theorem \cite{BM} using such convexity directly. We learned from Song Sun that Berndtsson can extend his results in \cite{Bern11} to give a new proof of Tian-Zhu's uniqueness theorem. 
\end{rmk}

\subsection{Perelman's $\mu$-functional}

As a direct consequence of Theorem \ref{T-noninvariant}, we can obtain an upper bound for Perelman's $\mu$-functional  and Kahler Ricci soliton, if exists, maximizes the $\mu$ functional. Recall Perelman's W-functional \cite{Perelman01} is defined as
\[
W(g, \tau, f)=\frac{1}{(4\pi \tau)^{-n/2}}\int_M (\tau (R+|\nabla f|^2)-n+f)e^{-f}dv_g,
\]
where $R$ is the scalar curvature of the metric $g$. 
On Fano manifolds, however, it is more convenient to let $\tau=1/2$ and it is also convenient using (complex) geometric quantities which differ only by multiple of a constant.
Hence we consider $W$-functional on $(M, [\omega_0])$ by
\begin{equation}\label{E-w}
W(\omega, f)=\int_M (R+|\nabla f|^2+f)e^{-f}\omega^n,
\end{equation}
with the normalization condition
\[
\int_M e^{-f}\omega^n=\int_M \omega^n=V. 
\]
And the $\mu$-functional is defined to be
\[
\mu(\omega)=\inf_{f}W(\omega, f). 
\]
By Rothaus \cite{Ro}, there always exists some smooth function $f$ to minimize $W(\omega, \cdot)$ and such a minimizer $f$ satisfies the equation
\begin{equation}\label{E-gs}
2\Delta f+f+R-|\nabla f|^2=V^{-1}\mu. 
\end{equation}
When $\omega_0$ is a Kahler-Ricci soliton, then $f=-h_0$. An easy way to see this fact is to use a result of Sun-Wang \cite{SW} (Lemma 3.4), where they prove that there exists a unique  solution of \eqref{E-gs} if $g$ is a gradient shrinking Ricci-soliton. Suppose $\omega_0$ is a Kahler-Ricci soliton. Then 
\[
\Delta h_0+|\nabla h_0|^2+h_0=constant. 
\]
We can write the scalar curvature as $R=n+\Delta h_0$. 
It follows that $-h_0+c$ is a solution of \eqref{E-gs} for an appropriate constant $c$. 
By the uniqueness and the normalization condition, we know $f=-h_0$. In particular,
\[
\mu(\omega_0)=W(\omega_0, f)=W(\omega, -h_0). 
\]

\begin{cor}\label{cor-mu}For any $\omega\in [\omega_0]$, 
\[
\mu (\omega)+H(\omega)\leq nV.
\]
Hence Perelman's $\mu$-functional is bounded above by, 
\[
\mu(\omega)\leq nV-N_X. 
\]
In particular, if $\omega_0$ is a Kahler-Ricci soliton, then for any $\omega\in [\omega_0]$, 
\[
\mu(\omega)\leq \mu(\omega_0)=nV-N_X. 
\]
\end{cor}

\begin{proof}
We  observe that, since $R=n+\Delta h$, 
\[
\mu(\omega)\leq W(\omega, -h)=\int_M \left((n+\Delta h)+|\nabla h|^2-h\right)e^h\omega^n= nV-H(\omega). 
\]
When $\omega_0$ is a Kahler-Ricci soliton, then $h_0$ is the Hamiltonian of $\xi_0$ (the imaginary part of the extremal vector field),  
\[
H(\omega_0)=\int_M h_0 e^{h_0} \omega_0^n=\int_M \theta_{\xi_0}e^{h_0}\omega_0^n=H_0(\xi_0)=N_X.
\]
By the discussion above, we know
\[
\mu(\omega_0)=W(\omega, -h_0)=nV-N_X. 
\]
\end{proof}

For metrics which is invariant with respect to $Im(X)$, such an upper bound was proved in a recent paper by Tian-Zhang-Zhang-Zhu \cite{TZ3} and the authors proved that 
the Kahler-Ricci flow with any invariant  initial metric actually maximizes $\mu$ functional asymptotically 
provided that the modified Mabuchi functional is bounded below, and it is used to give an alternative proof of convergence of the Kahler-Ricci flow for invariant metric assuming the existence of Kahler-Ricci soliton. It is a belief that if Kahler-Ricci soliton exists, then the Kahler-Ricci flow would actually converge to the soliton, without invariant assumption on initial metric. In \cite{TZ3} the authors proposed a conjecture (Conjecture 3.3), which suggests how one might prove such a convergent result.  Corollary \ref{cor-mu} seems to strengthen this belief.

\section{Kahler-Ricci soliton and geodesic stability}

First we recall the following Donaldson's conjecture \cite{Donaldson99}, 
\begin{conj}[Donaldson] The following are equivalent:
\begin{enumerate}
\item There is no constant scalar metric in $(M, [\omega_0])$.
\item There is a geodesic ray $\phi(t), t\in [0, \infty)$ such that the derivative of Mabuchi's K-energy, for all $t\in [0, \infty)$, 
\[
\frac{d\cK}{dt}=-\int_M \dot\phi (R-\underline {R})\omega^n_\phi<0.
\]

\item For any Kahler potential $\phi$, there exists a geodesic ray as in (2) starting at $\phi$. 
\end{enumerate}
\end{conj}

The conjecture remains open in both directions (see \cite{Chen08} for partial results). One technical difficulty is that  the geodesic rays  do not have enough regularity to talk about the derivative of $\cK$-energy in general. But $\cF$-functional and its derivative make sense even for geodesics with only bounded potential. When $(M, [\omega_0])$ is Fano, it is very natural to  formulate the conjecture in terms of $\cF$-functional.

\begin{conj}
\label{C-stability}
Let $(M, [\omega_0])$ be a Fano manifold. Then the following are equivalent

\begin{enumerate}
\item There exists  no  Kahler-Einstein metric in $(M, [\omega_0])$.

\item There exists a geodesic ray $\phi(t)$ such that for all $t\in [0, \infty)$
$\frac{d\cF }{dt}<0. $

\item For every point $\phi\in \cH$, there exists a geodesic ray as in (2) starting at $\phi$. 
\end{enumerate}
\end{conj}

From analytic point of view, Kahler-Ricci soliton is a natural generalization of Kahler-Einstein metric.  We extend
the discussion for Kahler-Einstein metrics to Kahler-Ricci solitons. We formulate a version of geodesic stability for Kahler-Ricci soliton, with the hope that it will motivate an algebro-geometric notion of stability for Kahler-Ricci solitons. The key notion in our formulation is the modified $\cF$ functional introduced by Tian-Zhu \cite{Tianzhu00}.
\begin{conj} Let $(M, [\omega_0])$ be a Fano manifold. Recall $X$, the extremal holomorphic vector field, is determined \emph{a priori}. We consider $Im(X)$ invariant metrics in $\cH$.  The following are equivalent,

\begin{enumerate}
\item There exists  no  Kahler-Ricci soliton in $(M, [\omega_0])$.

\item There exists a geodesic ray $\phi(t)$ such that for all $t\in [0, \infty)$
$\frac{d\cF_X }{dt}<0. $

\item For every point $\phi\in \cH$, there exists a geodesic ray as in (2) starting at $\phi$. 
\end{enumerate}
\end{conj}

\end{document}